\theoremstyle{theorem}
\newtheorem{theorem}{Theorem}
\theoremstyle{definition}
\newtheorem*{remark}{Remark}
\newtheorem{proposition}[theorem]{Proposition}
\newtheorem{example}[theorem]{Example}
\newtheorem{xca}{Exercise}
\newcommand{\N}{\mathbb{N}}
\newcommand{\R}{\mathbb{R}}
\newcommand{\calL}{\mathcal{L}}
\begin{document}

\title{A Lecture on Integration by Parts}
\author{}
 
\maketitle



\centerline{\scshape John A.~Rock}
\medskip
{\footnotesize
 \centerline{Cal Poly Pomona}
   \centerline{jarock@cpp.edu}
}

\begin{abstract}
Integration by parts (IBP) has acquired a bad reputation. While it allows us to compute a wide variety of integrals when other methods fall short, its implementation is often seen as plodding and confusing. Readers familiar with \textit{tabular} IBP understand that, in particular cases, it has the capacity to significantly streamline and shorten computation. In this paper, we establish a tabular approach to IBP that is completely general and, more importantly, a powerful tool that promotes exploration and creativity. 
\end{abstract}

\vspace{5mm}

\noindent
Integration by parts (IBP) can always be utilized with a tabular approach. More importantly, tabular IBP is a powerful tool that promotes exploration and creativity. If mentioned at all, the explanation of a tabular method for IBP in a calculus textbook is often perfunctory or relegated to the section of exercises. (For example, see \cite[\S 7.2]{BriggsCochGill}.) The approach developed here allows us to readily compute integrals using IBP, quickly learn from poor choices when setting up IBP, and neatly derive formulas corresponding to important results such as Taylor's Formula with Integral Remainder (see Theorem \ref{thm:Taylors} as well as \cite{Horo}.) 

Our technique actually has a straightforward justification. Recall the product rule for derivatives: For suitable functions $u$ and $v$ of a real variable $x$, we have
\begin{center}
	$(uv)'=uv'+u'v.$
\end{center}
Integrating both sides of this equation, applying the definition of antiderivative, and changing variables accordingly yields 
\begin{center}
	$uv=\int u\,dv+\int v\,du.$
\end{center}
A slight rearrangement produces the familiar formula (``ultra-violet voodoo'')
\begin{center}
	$\int u\,dv=uv-\int v\,du.$
\end{center}

In general, the idea behind IBP is to let $\int u\,dv$ denote a given integral in the hopes that a new integral $\int v\,du$ can be readily determined or managed. The pair $u$ and $dv$ are chosen so that their product represents the given integrand; and the pair $v$ and $du$ are determined by $\int\,dv=\int v_0\,dx$ (usually excluding a constant) and $u'=du/dx$, respectively.\footnote{The acronym LIPET (which stands for ``logarithm, inverse trigonometric, polynomial, exponential, then trigonometric'') helps us choose an appropriate $u$ in cases where the given integrand is a product of relatively simple functions, and other techniques do not suffice.} It is important to note that, like substitution, \textit{with IBP we replace one integral with another}, always leaving an integral that must be resolved. The tabular method described here works the same way.

To motivate the use of a tabular approach, suppose we want to iterate IBP as follows (this will not always be the case):  Given $\int u\,dv$, let $u_1:=u$, $v_0\,dx:= dv$, and $v_1:=\int\,dv=\int v_0\,dx$. For each integer $j\geq 2$, let $u_j:=u_{j-1}'$ and $v_j:=\int v_{j-1}\,dx$. (We assume throughout that the functions involved behave well enough.) Then, for each integer $n\geq 2$, we have
\begin{align} \textstyle
\int u\,dv
	&\textstyle = uv-\int v\,du	\notag\\
	&\textstyle = u_1v_1 - u_2v_2 +\int v_2\,du_2 \notag\\
	&\textstyle = u_1v_1 - u_2v_2 + u_3v_3 - \int v_3\,du_3 \notag\\
	\cdots ~ &\textstyle = u_1v_1-u_2v_2+u_3v_3-\cdots + (-1)^{n-1}u_nv_n+ (-1)^{n}\int v_n\,du_n. \label{eqn:Initial}
\end{align}
For a full proof of \eqref{eqn:Initial}, see \cite{Folley}. Note that in  \eqref{eqn:Initial} we suppress the addition of a constant. Throughout the paper, $C$ and $C_0$ denote constants.

Assuming we iterate IBP as in \eqref{eqn:Initial}, when should we stop?  The answer depends on how we want to handle the integrals $(-1)^{j}\int v_j\,du_j$, \textit{as each one is generated}. The simplest case occurs when some $(-1)^{j}\int v_j\,du_j$ is readily computed, but the more difficult cases prove to be the most interesting. Also, if we are not sure how to proceed, we can stop at any point and assess the situation. 

To create a table that leads to \eqref{eqn:Initial}, label three columns $+/-$, $u$, and $dv$. Start the first row with a $+$ sign in the $+/-$ column, a chosen $u_1$ in the $u$ column, and $v_0 $ (where $dv = v_0\,dx$ and the differential $dx$ is suppressed) in the $dv$ column. With each new row, alternate (alt.)~between $+$ and $-$ in  $+/-$ column, differentiate (diff.)~the function in the $u$ column, and integrate (int.)~the function in the $dv$ column. Also with each row, decide how to proceed based on the integral of the product of the terms in that row.
\begin{center}
	\begin{tabular}{c|cccl} 
	(alt.)	&(diff.)&	& (int.)	&\\
	$+/-$ 	& $u$  	& 	& $dv$	&\\ \hline \\
	$+$		& $u_1$ 	& 	& $v_0$	& (The top row contains the original integrand.)\\
			&		& $\searrow$	&	& \\	
	$-$		& $u_2$	&	& $v_1$ &  (Is $-\int u_2v_1 = -\int v_1\,du_1$ manageable? \\
			&		& $\searrow$	& 	& \hspace{1mm} If so, stop. If not, continue.)\\	 
	$+$		& $u_3$	& 	& $v_2$ & \\ 
	$\vdots$		&	$\vdots$	& & $\vdots$	& \\	 
	$(-1)^{n-1}$		& $u_n$	& 	& $v_{n-1}$ &\\ 
			&		& $\searrow$	& &\\
	$(-1)^{n}$	& $u_{n+1}$		&$\rightarrow$& $v_n$ & (The bottom row represents $(-1)^{n}\int v_n\,du_n$.)
	\end{tabular}
\end{center}
Each of the products $(-1)^{j-1}u_jv_j$ in \eqref{eqn:Initial} are obtained by taking the product of $(-1)^{j-1}$ from $jth$ row of the $+/-$ column, $u_j$ from the $j$th row of the $u$ column, and $v_j$ from \textit{one row further down} in the $(j+1)$th row of the $dv$ column (i.e., $u_j$ and $v_j$ match up diagonally, as indicated by $\searrow$). Summing together yields \eqref{eqn:Initial}:
\begin{align*}
	\textstyle \int u\,dv	&= \textstyle u_1v_1-u_2v_2+u_3v_3+\cdots + (-1)^{n}\int v_n\,du_n.
\end{align*}

It is important to note that we can stop at any point, and we decide when to stop based on the information provided in the last row (which represents the integral $(-1)^{n}\int v_n\,du_n$). Also, we may not always want to iterate IBP using $u_j:=u_{j-1}'$ and $v_j:=\int v_{j-1}\,dx$ for $j\geq 2$ as above. For instance, it may be preferable to simplify the integrand of some $(-1)^{j}\int v_j\,du_j$ and apply tabular IBP separately to the simplified integral. See Example \ref{eg:Twice}.

In the remainder of the paper we discuss several examples from calculus and analysis. Further examples can be found in \cite{Horo} and \cite{Murty}, as well as the references therein. Readers are \textit{strongly} encouraged to work out the examples themselves by hand using tabular IBP! Remember to construct the tables \textit{one row at a time.}

\begin{example}
\label{eg:lnx}
Consider $\int \ln{x}\,dx$ where $x>0$. Let $u=\ln{x}$ and $dv=1\,dx$. Note that after generating the second row of the table, we consider the integral of the product of the functions in this row in order to decide what to do next.

\begin{center}
	\begin{tabular}{c|cccl} 
	(alt.)	&(diff.)&	& (int.)	&\\
	$+/-$ 	& $u$  	& 	& $dv$	&\\ \hline \\
	$+$		& $\ln{x}$ 	& 	& $1$	& \\
			&		& $\searrow$	&	& \\	
	$-$		& $\displaystyle \frac{1}{x}$	&$\rightarrow$	& $x$ & ($-\int \frac{1}{x}\cdot x \,dx= -\int \,dx =-x+C$) \\
	\end{tabular}
\end{center}
By \eqref{eqn:Initial}, we have 
\[
\int \ln{x} \,dx= x\ln{x}-x+C.
\]
\end{example}

\begin{example}
Consider $\int e^{3x}\sin{2x}\,dx$. Let $u = e^{3x}$ and $dv = \sin{2x}\,dx$. 
\begin{center}
	\begin{tabular}{c|cccl} 
	(alt.)		&(diff.)&	& (int.)	&\\
	$+/-$ 	& $u$ &	& $dv$ & \\ \hline \\
	$+$		& $e^{3x}$ 	& 	& $\sin{2x}$ &  \hspace{5mm}  (The original integrand.)\\
			&		& $\searrow$	& &\\	
	$-$		& $3e^{3x}$	&	& $\displaystyle -\frac{1}{2}\cos{2x}$ & \hspace{5mm} ($  +\frac{3}{2}\int e^{3x}\cos{2x} \,dx$, try another step.)\\
			&		& $\searrow$	& &\\	 
	$+$		& $9e^{3x}$	& $\rightarrow$	& $\displaystyle -\frac{1}{4}\sin{2x}$&  \hspace{5mm} ($-\frac{9}{4}\int e^{3x}\sin{2x} \,dx$, a copy of\\
	& & & & \hspace{8mm} the original integral.)  
	\end{tabular}
\end{center}
At this point, we see that the last row generates a copy of the original integral, so  we stop and see where things stand. We have
\begin{align*}
	\int e^{3x}\sin{2x}\,dx	&= -\frac{e^{3x}}{2}\cos{2x} +\frac{3e^{3x}}{4}\sin{2x} -\frac{9}{4}\int e^{3x}\sin{2x} \,dx +C.
\end{align*}
Adding $\frac{9}{4}\int e^{3x}\sin{2x} \,dx$ to both sides then multiplying both sides by $\frac{4}{13}$ yields 
\begin{align*}
	\int e^{3x}\sin{2x} \,dx	&=  \frac{e^{3x}}{13}\left(3\sin{2x}-2\cos{2x}\right)+C_0.
\end{align*}
\end{example}

\begin{example}
Consider $\int (x^2-3x)\sin{x}\,dx$. For the sake of exploration and discovery (in this case, learning from a mistake), let $u=\sin{x}$ and $dv=(x^2-3x)\,dx$. This will quickly prove to be a bad choice.
\begin{center}
	\begin{tabular}{c|cccl} 
	(alt.)	&(diff.)&	& (int.)	&\\
	$+/-$ 	& $u$  	& 	& $dv$	&\\ \hline \\
	$+$		& $\sin{x}$ 	& 	& $x^2-3x$&\\
			&		& $\searrow$	&	& \\	
	$-$		& $\cos{x}$	&$\rightarrow$ & $\displaystyle \frac{x^3}{3}-\frac{3x^2}{2}$ &  $\displaystyle \left(\textnormal{Consider } \int \left(\frac{3x^2}{2}-\frac{x^3}{3}\right)\cos{x}\,dx \textnormal{.}\right)$
	\end{tabular}
\end{center}

We can immediately see that the integral $\displaystyle \int \left(\frac{3x^2}{2}-\frac{x^3}{3}\right)\cos{x}\,dx$ is at least as difficult as the original $\displaystyle \int (x^2-3x)\sin{x}\,dx$.  Nevertheless, we can apply \eqref{eqn:Initial} to get 
\begin{align*}
\int (x^2-3x)\sin{x}\,dx	&=  \left(\frac{x^3}{3}-\frac{3x^2}{2}\right)\sin{x}+ \int \left(\frac{3x^2}{2}-\frac{x^3}{3}\right)\cos{x}\,dx +C. 
\end{align*}
While this is technically true, it is certainly not what we want. 

Try again. This time, let $u=(x^2-3x)$ and $dv=\sin{x}\,dx$, and consider the options with the integral generated by each new row, \textit{one row at a time}.
\begin{center}
	\begin{tabular}{c|cccl} 
	(alt.)	&(diff.)&	& (int.)	&\\
	$+/-$ 	& $u$  	& 	& $dv$	&\\ \hline \\
	$+$		& $x^2-3x$ 	& 	& $\sin{x}$& $\hspace{5mm}$ \\
			&		& $\searrow$	&	& \\	
	$-$		& $2x-3$	&	& $-\cos{x}$ &  $\hspace{5mm}$ ($ -\int (2x-3)\cos{x}\,dx$ simplifies with\\ 
			&		& $\searrow$	& 	& \hspace{10mm} another iteration.)\\
	$+$		& $2$	&$\rightarrow$ 	& $-\sin{x}$ &  $\hspace{5mm}$ ($ -\int 2\sin{x}\,dx=2\cos{x} +C$, \\
	& & & &$\hspace{10mm}$ so stop here.)
	\end{tabular}
\end{center}
By \eqref{eqn:Initial}, we have 
\begin{align}
	\int (x^2-3x)\sin{x}\,dx	&= (3x-x^2)\cos{x}+(2x-3)\sin{x} +2\cos{x} +C. \label{eqn:xsquaredsin}
\end{align}
\end{example}

\begin{remark}
Readers familiar with other versions of tabular IBP may note that we could have easily generated one more row in the previous table, yielding: \vspace{2mm}
\begin{center}
	\begin{tabular}{c|cccl} 
	(alt.)	&(diff.)&	& (int.)	&\\
	$+/-$ 	& $u$  	& 	& $dv$	&\\ \hline \\
	$+$		& $x^2-3x$ 	& 	& $\sin{x}$& $\hspace{5mm}$\\
			&		& $\searrow$	&	& \\	
	$-$		& $2x-3$	&	& $-\cos{x}$ &  $\hspace{5mm}$ \\
			&		& $\searrow$	& 	&\\	 
	$+$		& $2$	& 	& $-\sin{x}$ & \\
		&		& $\searrow$	& 	& \\
	$-$		& $0$	& 	$\rightarrow$ & $\cos{x}$ &  $\hspace{5mm}$ ($ -\int 0\,dx=C$.)
	\end{tabular} \vspace{2mm}
\end{center}
Formula \eqref{eqn:xsquaredsin} immediately follows.

Similarly, for a polynomial $P(x)$ and constants $a\neq 0$, $b\neq 0$, $q\neq 0$, and $q\neq 1$, integrals of the form
\[
\int \frac{P(x)}{(ax+b)^q}\,dx
\]
are readily computed using tabular IBP since successive derivatives of $P(x)$ eventually vanish (see \cite{Horo}). 

It is especially convenient when 0 appears in the $u$ column, which happens if $u$ is chosen to be a polynomial and we iterate as above by setting $u_j:=u_{j-1}'$ and $v_j:=\int v_{j-1}\,dx$ for $j\geq 2$. However, it is not necessary for these conditions to hold in order to know when to stop or for tabular IBP to be effective. Rather, with tabular IBP we proceed by considering our options with the integral $(-1)^{j}\int v_j\,du_j$ generated by each new row, \textit{one row at a time}.
\end{remark}

Our tabular method allows us to compute integrals of the form $\int \sin{ax}\cos{bx}\,dx$, $\int \sin{ax}\sin{bx}\,dx$, and $\int \cos{ax}\cos{bx}\,dx$ where $a\neq b, a\neq 0$, and $b\neq 0$ without resorting to the use trigonometric identities. See \cite{Murty}.

\begin{example}
Consider $\displaystyle \int \sin{2x}\cos{5x}\,dx$. Let $u = \sin{2x}$ and $dv = \cos{5x}\,dx$. 
\begin{center}
	\begin{tabular}{c|cccl} 
	(alt.)		&(diff.)&	& (int.)	&\\
	$+/-$ 	& $u$ &	& $dv$ & \\ \hline \\
	$+$		& $\sin{2x}$ 	& 	& $\cos{5x}$ & \\
			&		& $\searrow$	& &\\	
	$-$		& $2\cos{2x}$	&	& $\displaystyle \frac{1}{5}\sin{5x}$ & \\
			&		& $\searrow$	& &\\	 
	$+$		& $-4\sin{2x}$	& $\rightarrow$	& $\displaystyle -\frac{1}{25}\cos{5x}$&  \hspace{5mm} $ (+\frac{4}{25}\int \sin{2x}\cos{5x} \,dx$, \\& & & & \hspace{7mm} a copy of the original integral.)
	\end{tabular}
\end{center}
By \eqref{eqn:Initial} we have
\begin{align*}
	\int \sin{2x}\cos{5x}\,dx	=~& \frac{1}{5}\sin{2x}\sin{5x} +\frac{2}{25}\cos{2x}\cos{5x}\\ &+\frac{4}{25}\int \sin{2x}\cos{5x} \,dx +C.
\end{align*}
Hence, $\displaystyle \int \sin{2x}\cos{5x}\,dx = \frac{5}{21}\sin{2x}\sin{5x} +\frac{2}{21}\cos{2x}\cos{5x}+C_0.$
\end{example}

\begin{remark}
Integrals like $\int \sin^2{ax}\,dx$ for nonzero $a$ also follow from IBP after an application of the Pythagorean identity $\sin^2{ax}=1-\cos^2{ax}$, as shown in \cite{Murty}.
\end{remark}

\begin{example}
\label{eg:Twice}
In this example, tabular IBP is applied twice, but not in a single table. Consider $\int (3x^2-x)\ln^2{x}\,dx$ where $x>0$. Let $u=\ln^2{x}$ and $v=(3x^2-x)\,dx$.
\begin{center}
	\begin{tabular}{c|cccl} 
	(alt.)	&(diff.)&	& (int.)	&\\
	$+/-$ 	& $u$  	& 	& $dv$	&\\ \hline \\
	$+$		& $\ln^2{x}$ 	& 	& $3x^2-x$	& \\
			&		& $\searrow$	&	& \\	
	$-$		& $\displaystyle \frac{2}{x}\ln{x}$	&  $\rightarrow$ &$\displaystyle x^3-\frac{x^2}{2}$ &  \hspace{5mm} 
	$\displaystyle \left(-\int (2x^2-x)\ln{x}\,dx \right)$ \\
	\end{tabular}
\end{center}
If it is not clear what to do next, stop and assess the situation. So far we have
\begin{align}
\label{eqn:SoFar}
	\int (3x^2-x)\ln^2{x}\,dx&= \left(x^3-\frac{x^2}{2}\right)\ln^2{x} -\int (2x^2-x)\ln{x}\,dx +C.
\end{align}
To evaluate $\int (2x^2-x)\ln{x}\,dx$, we apply tabular IBP again but in a separate table.
\begin{center}
	\begin{tabular}{c|cccl} 
	(alt.)	&(diff.)&	& (int.)	&\\
	$+/-$ 	& $u$  	& 	& $dv$	&\\ \hline \\
	$+$		& $\ln{x}$ 	& 	& $2x^2-x$	& \\
			&		& $\searrow$	&	& \\	
	$-$		& $\displaystyle \frac{1}{x}$	&  $\rightarrow$ &$\displaystyle \frac{2x^3}{3}-\frac{x^2}{2}$ &  \hspace{3mm} 
	$\displaystyle \left(\int\left(\frac{x}{2}-\frac{2x^2}{3}\right)\,dx =\frac{x^2}{4}-\frac{2x^3}{9} +C.\right)$ \\
	\end{tabular}
\end{center}
Hence,
\begin{align}
	\int (2x^2-x)\ln{x}\,dx	
	&= \left( \frac{2x^3}{3}-\frac{x^2}{2}\right)\ln{x} + \frac{x^2}{4}-\frac{2x^3}{9} +C. \label{eqn:2ndTable}
\end{align}
Carefully combining \eqref{eqn:SoFar} and \eqref{eqn:2ndTable} yields
\begin{align*}
	\int (3x^2-x)\ln^2{x}\,dx
	&= \left(x^3-\frac{x^2}{2}\right)\ln^2{x} + \left(\frac{x^2}{2}-\frac{2x^3}{3}\right)\ln{x} +\frac{2x^3}{9} -\frac{x^2}{4}+C_0.
\end{align*}
Note that our choice of $u$ in the second table is not the derivative of $\frac{2}{x}\ln{x}$. That is, in this example we did not iterate IBP by setting $u_j:=u_{j-1}'$ and $v_j:=\int v_{j-1}\,dx$ for $j\geq 2$, but we did apply tabular IBP twice.
\end{example} 

Other results from calculus and analysis that follow nicely from tabular IBP are explored below.

\begin{proposition}
Let $n$ be a positive integer and $x>0$. Then
\begin{align}
	\int \ln^n{x}\,dx &= x\sum_{k=0}^n(-1)^{n-k}\frac{n!}{k!}\ln^k{x}+C. \label{eqn:LogPower}
\end{align}
\end{proposition}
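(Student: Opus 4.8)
The plan is to reduce to a single tabular IBP computation via the substitution $x = e^t$ (equivalently $t=\ln x$, $dx = e^t\,dt$), which transforms the integral into $\int t^n e^t\,dt$ — the prototypical case in which tabular IBP terminates because successive derivatives of the polynomial factor eventually vanish.

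First I would set up the table for $\int t^n e^t\,dt$ with $u = t^n$ and $dv = e^t\,dt$. The $u$ column reads $t^n,\ nt^{n-1},\ n(n-1)t^{n-2},\dots$, and in general the $(j+1)$st row has $+/-$ sign $(-1)^{j}$ and $u$-entry $\frac{n!}{(n-j)!}\,t^{n-j}$; the $dv$ column is constantly $e^t$. After $n+1$ rows the $u$-entry is $n!$, and the next row produces $0$ in the $u$ column, so the generated integral $(-1)^{n+1}\int 0\cdot e^t\,dt$ contributes only a constant and we stop. By \eqref{eqn:Initial}, collecting the diagonal products gives
\[
\int t^n e^t\,dt = \sum_{j=0}^{n}(-1)^{j}\frac{n!}{(n-j)!}\,t^{n-j}e^t + C = e^t\sum_{k=0}^{n}(-1)^{n-k}\frac{n!}{k!}\,t^{k} + C,
\]
where the last equality is the reindexing $k = n-j$. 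Substituting back $t=\ln x$ and $e^t = x$ then yields \eqref{eqn:LogPower}.

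An alternative, equally short route avoids substitution: induct on $n$, using the base case $\int\ln x\,dx = x\ln x - x + C$ from Example \ref{eg:lnx} together with the reduction formula $\int\ln^n x\,dx = x\ln^n x - n\int\ln^{n-1}x\,dx$, which comes from a two-row table with $u=\ln^n x$, $dv = 1\,dx$ after simplifying $-\int \frac{n\ln^{n-1}x}{x}\cdot x\,dx$. Either way, the only place that demands attention is the bookkeeping of the finite sum — checking that the index shift is correct and, in the inductive version, that the leading term $x\ln^n x$ supplies exactly the missing $k=n$ summand $(-1)^{0}\frac{n!}{n!}x\ln^n x$. There is no genuine obstacle here; once the table (or the reduction formula) is written down, the remaining work is routine algebra.
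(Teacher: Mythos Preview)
Your proposal is correct. Your primary route---the substitution $x=e^t$ followed by a single terminating tabular IBP on $\int t^n e^t\,dt$---is genuinely different from the paper's argument, which stays in the variable $x$, derives only the two-row reduction formula $\int\ln^n x\,dx = x\ln^n x - n\int\ln^{n-1}x\,dx$, and then appeals to induction (exactly what you sketch as your ``alternative''). Your substitution approach has the advantage that the closed form drops out in one pass with no induction, since the polynomial column reaches $0$; the paper's approach keeps the table minimal (two rows) but pushes the remaining work into the inductive step. Either way the algebra is routine, and your reindexing $k=n-j$ is correct.
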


\begin{proof}
Let $u=\ln^n{x}$ and $dv=1\,dx$.
\begin{center}
	\begin{tabular}{c|cccl} 
	(alt.)	&(diff.)&	& (int.)	&\\
	$+/-$ 	& $u$  	& 	& $dv$	&\\ \hline \\
	$+$		& $\ln^n{x}$ 	& 	& $1$	& \\
			&		& $\searrow$	&	& \\	
	$-$		& $\displaystyle \frac{n\ln^{n-1}{x}}{x}$	&  $\rightarrow$ & $x$ &  \hspace{5mm} 
	$\displaystyle \left(-n\int\ln^{n-1}{x}\,dx\right)$ \\
	\end{tabular}
\end{center} 
Hence, $\displaystyle \int\ln^n{x}\,dx = x\ln^n{x}-n\int\ln^{n-1}{x}\,dx+C.$ Combining this result with an induction argument yields \eqref{eqn:LogPower}.
\end{proof}

Theorem \ref{thm:Taylors} provides a statement of Taylor's Formula with Integral Remainder. Its proof follows readily from tabular IBP. 

\begin{theorem}[Taylor's Formula with Integral Remainder]
\label{thm:Taylors}
If $f:\R\to\R$ has $n+1$ continuous derivatives on an interval $I$ containing $a$, then for all $x$ in $I$ we have
\begin{align*}
	f(x)	=& f(a)+f^{(1)}(a)(x-a)+\frac{f^{(2)}(a)}{2!}(x-a)^2 +\cdots+\frac{f^{(n)}(a)}{n!}(x-a)^n\\
	&+\int_a^x\frac{f^{(n+1)}(t)}{n!}(x-t)^n,
\end{align*}
where $f^{(j)}$ denotes the $j$th derivative of $f$.
\end{theorem}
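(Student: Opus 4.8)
The plan is to feed the Fundamental Theorem of Calculus straight into tabular IBP. Write $f(x)-f(a)=\int_a^x f'(t)\,dt$ and build one table for $\int_a^x f'(t)\,dt$, with $t$ the variable of integration and $x$ a fixed parameter. Put $u_1:=f'(t)$ and $v_0:=1$; differentiating down the $u$ column then gives $u_j=f^{(j)}(t)$. The one genuinely creative step is in the $dv$ column: at each row I would pick the constant of integration so that the new entry \emph{vanishes at $t=x$}. Integrating $1$ this way gives $v_1=-(x-t)$, then $v_2=\tfrac{(x-t)^2}{2!}$, and in general $v_j=(-1)^j\tfrac{(x-t)^j}{j!}$, which one checks at a stroke satisfies $v_j'=v_{j-1}$.

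With the table run out to $n+1$ rows --- the last row being $(-1)^n$, $f^{(n+1)}(t)$, $(-1)^n\tfrac{(x-t)^n}{n!}$, so that it represents the remainder $(-1)^n\int_a^x v_n\,du_n$ --- I would apply \eqref{eqn:Initial} in its definite form, where every diagonal product $(-1)^{j-1}u_jv_j$ is replaced by $\bigl[(-1)^{j-1}u_jv_j\bigr]_a^x$ (this reduction of \eqref{eqn:Initial} to definite integrals is immediate from the Fundamental Theorem of Calculus, or from a one-line induction using IBP for definite integrals). Now $(-1)^{j-1}u_jv_j=-f^{(j)}(t)\tfrac{(x-t)^j}{j!}$ carries the factor $(x-t)^j$ with $j\geq 1$, so it is $0$ at $t=x$; hence $\bigl[(-1)^{j-1}u_jv_j\bigr]_a^x=f^{(j)}(a)\tfrac{(x-a)^j}{j!}$. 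Summing $j=1,\dots,n$ produces the Taylor polynomial (with $f(a)$ supplied by the left-hand side), while the last-row integral collapses to $(-1)^n\int_a^x(-1)^n\tfrac{(x-t)^n}{n!}f^{(n+1)}(t)\,dt=\int_a^x\tfrac{f^{(n+1)}(t)}{n!}(x-t)^n\,dt$. Transposing $f(a)$ gives the asserted identity.

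I expect no serious mathematical obstacle; the hypothesis that $f^{(n+1)}$ is continuous on $I$ is precisely what validates each integration step in the $dv$ column and guarantees the remainder integral exists, and \eqref{eqn:Initial} does the bookkeeping. The only points requiring care are (i) confirming once and for all that the shifted antiderivatives $v_j=(-1)^j\tfrac{(x-t)^j}{j!}$ telescope correctly, and (ii) remembering to evaluate \emph{all} the diagonal terms between the limits $a$ and $x$ rather than reading off \eqref{eqn:Initial} as an indefinite identity. The cases $n=0$ (bare Fundamental Theorem of Calculus, no IBP) and $n=1$ (a single step) can be noted as sanity checks, but the single table above handles every $n\geq 1$ at once.
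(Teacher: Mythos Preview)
Your proposal is correct and is essentially the paper's own argument: both start from $f(x)-f(a)=\int_a^x f'(t)\,dt$, run a single tabular IBP in $t$ with the antiderivatives in the $dv$ column chosen so that they vanish at $t=x$ (yielding powers of $(x-t)$), and read off the Taylor terms from the boundary evaluations and the remainder from the last row. The only cosmetic difference is that the paper loads the extra sign into the initial data by writing $u=-f^{(1)}(t)$, $dv=-1\,dt$, whereas you take $u=f'(t)$, $v_0=1$ and let the sign surface in $v_j=(-1)^j(x-t)^j/j!$; the diagonal products $(-1)^{j-1}u_jv_j=-f^{(j)}(t)(x-t)^j/j!$ and the final remainder are identical either way.
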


The following proof is essentially identical to the one presented in \cite{Horo}.

\begin{proof}
By the fundamental theorem of calculus, we have 
\begin{align*}
	f(x)-f(a)	&=\int_a^xf^{(1)}(t)\,dt = \int_a^x-f^{(1)}(t)(-1)\,dt.
\end{align*}
Choose $u=-f^{(1)}(t)$ and, hence, $dv=-1\,dt$. In the table below, integration is performed with respect to $t$ and the first antiderivative of $-1$ is taken to be $(x-t)$, where $x$ is treated as a constant. We have
\begin{center}
	\begin{tabular}{c|ccl} 
	(alt.)	&(diff.)&	& (int.)	\\
	$+/-$& $u$  	& 	& $dv$	\\ \hline \\
	$+$	& $-f^{(1)}(t)$ 	& 	& $-1$	\\
			&		& $\searrow$	&	 \\	
	$-$	& $-f^{(2)}(t)$ 	& 	& $(x-t)$	\\
			&		& $\searrow$	&	 \\
	$+$	& $-f^{(3)}(t)$ 	& 	& $\displaystyle -\frac{(x-t)^2}{2!}$	\\
	$\vdots$	& $\vdots$&	& \hspace{5mm}$\vdots$ \\					
	$(-1)^{n-1}$	& $-f^{(n)}(t)$	&	&$\displaystyle (-1)^{n}\frac{(x-t)^{n-1}}{(n-1)!}$ \\
			&		& $\searrow$	&	 \\
	$(-1)^{n}$	& $-f^{(n+1)}(t)$	&$\rightarrow$	&$\displaystyle (-1)^{n+1}\frac{(x-t)^n}{n!}$
	\end{tabular}
\end{center}
Therefore,
\begin{align*}
	f(x)-f(a) 	=&\int_a^x-f^{(1)}(t)(-1)\,dt\\
	=& \left[-f^{(1)}(t)(x-t)-\frac{f^{(2)}(t)}{2!}(x-t)^2 -\cdots-\frac{f^{(n)}(t)}{n!}(x-t)^n \right]_a^x \\
		&+\int_a^x\frac{f^{(n+1)}(t)}{n!}(x-t)^n\,dt.
\end{align*}
The result follows immediately.
\end{proof}

\begin{remark}
Given a function $f:[0,\infty)\to\R$, its Laplace transform 
\begin{align*}
	\calL\{f(t)\}		&=\int_0^\infty e^{-st}f(t)\,dt
\end{align*}
is often determined by an application of IBP. Indeed, the integrand is a product and it is easy to both integrate and differentiate $e^{-st}$. Similarly, Laplace transform formulas, such as the one for the $n$th derivative of a function, follow in an especially nice way from tabular IBP. See \cite{Horo} for details and further results in calculus and analysis that follow readily from tabular IBP, including a proof of the Residue Theorem.
\end{remark}

%
%

The following example uses tabular IBP to derive an asymptotic expansion.

\begin{example}
For $x>0$ and any positive integer $n$,
\begin{align*}
	f(x) = \int_x^\infty t^{-1}e^{x-t}\,dt	=&~\frac{1}{x}-\frac{1}{x^2}+\frac{2!}{x^3}-\cdots + (-1)^{n-1}\frac{(n-1)!}{x^n}\\ &\hspace{3mm}+(-1)^nn!\int_x^\infty t^{-n-1}e^{x-t}\,dt.
\end{align*}
To see why this is the case, apply tabular IBP with $u=t^{-1}$ and $dv=e^{x-t}\,dt$.
\begin{center}
	\begin{tabular}{c|ccl} 
	(alt.)	&(diff.)&	& (int.)	\\
	$+/-$& $u$  	& 	& $dv$	\\ \hline \\
	$+$	& $t^{-1}$ 	& 	& $e^{x-t}$	\\
			&		& $\searrow$	&	 \\	
	$-$	& $-t^{-2}$ 	& 	& $-e^{x-t}$	\\
			&		& $\searrow$	&	 \\
	$+$	& $2t^{-3}$ 	& 	& $e^{x-t}$	\\
	$\vdots$	& $\vdots$&	& \hspace{5mm}$\vdots$ \\					
	$(-1)^{n-1}$	& $(-1)^{n-1}(n-1)!\,t^{-n}$	&	&$(-1)^{n-1}e^{x-t}$ \\
			&		& $\searrow$	&	 \\
	$(-1)^{n}$	& $(-1)^{n}n!\,t^{-n-1}$	&	$\rightarrow $&$(-1)^{n}e^{x-t}$
	\end{tabular}
\end{center}
Hence, 
\begin{align*}
	\int_x^\infty t^{-1}e^{x-t}\,dt	=&~\left[ 
	\frac{-e^{x-t}}{t}+\frac{e^{x-t}}{t^2}-\frac{2!\,e^{x-t}}{t^3}-\cdots + (-1)^{n}\frac{(n-1)!\,e^{x-t}}{t^n}\right]_x^\infty \\ 	&\hspace{3mm}+(-1)^nn!\int_x^\infty t^{-n-1}e^{x-t}\,dt. 
\end{align*}
The result follows immediately. 

With a bit more work, we have the following asymptotic expansion:
\begin{align*}
	f(x)= \int_x^\infty t^{-1}e^{x-t}\,dt	\sim&~\frac{1}{x}-\frac{1}{x^2}+\frac{2!}{x^3}-\cdots + (-1)^{n-1}\frac{(n-1)!}{x^n}+\cdots
\end{align*}
as $x \to \infty$. (Here $g(x) \sim h(x)$ as $x\to\infty$ means $g(x)/h(x)\to 1$ as $x\to\infty$.) See \cite[Example 7.2.2]{HoffMars}.
\end{example}


\begin{xca}
Evaluate $\int x^n\sin{ax}\,dx$ where $n\in\N$ and $a\neq 0$. \
\end{xca}

\begin{xca}
Evaluate $\int x^2e^x\sin{x}\,dx$.
\end{xca}

\begin{xca}
Evaluate $\int\ln{(x^2+4x+7)}\,dx$.
\end{xca}

\begin{xca}
Show that $n^b\int_0^1 (1-s)^ns^{b-1}\,ds = n!n^b/(b(b+1)\cdots (b+n))$ where $n\in\N$ and $b>0$. (This integral is used to study the Gamma Function in \cite[p.~418]{Horo}.)
\end{xca}

\vfill\eject

\end{document}